 \theoremstyle{plain}
 \newtheorem{thm}{Theorem}[section]
  \theoremstyle{definition}
  \newtheorem{defn}[thm]{Definition}
  \theoremstyle{remark}
  \newtheorem{rem}[thm]{Remark}
  \theoremstyle{plain}
  \newtheorem{prop}[thm]{Proposition}
  \theoremstyle{plain}
  \newtheorem{cor}[thm]{Corollary}
\newcommand\rest{\hbox{\raise.17em\hbox{$ |\kern-.2em$}\lower.23em\hbox{$-$}}}
\newcommand\avint{\hbox{\hbox{$\displaystyle \int$}\hbox{\kern-.9em{$-$}}}}
\newcommand\smavint{\hbox{\hbox{$\int$}\hbox{\kern-.75em{$-$}}}}
\newcommand{\R}{\mathbb{R}}
\newcommand{\Z}{\mathbb{Z}}
\newcommand{\Pa}{\mathbb{P}}
\begin{document}

\title{Minimal surfaces in circle bundles over Riemann surfaces}

\author{Pablo M. Chac{\'o}n and David L. Johnson}

\curraddr{Pablo M. Chac{\'o}n \\
Departamento de Matem\'{a}ticas \\
Universidad de Salamanca\\
Plaza de la Merced, 1-4, 37008\\
Salamanca, Spain}

\curraddr{David L. Johnson\\
Department of Mathematics\\
Lehigh University\\
Bethlehem, Pennsylvania 18015-3174 USA}

\email{pmchacon@usal.es, david.johnson@lehigh.edu}

\thanks{The first author was partially supported by MEC project MTM2007-60017
and Fundaci\'{o}n S\'{e}neca project 04540/GERM/06, Spain. The second
author was partially supported by the Universitat de Val$\grave{\text{\textipa{e}}}$ncia
during his sabbatical stay in Spain.}

\subjclass[2000]{Primary: 53A10 Secondary 53C20}

\keywords{minimal surfaces, rectifiable sections, singularities}

\date{\today}

\begin{abstract}
For a compact 3-manifold $M$ which is a circle bundle over a Riemann
surface $\Sigma$ with even Euler number $e(M)$, and with a Riemannian
metric compatible with the bundle projection, there exists a compact
minimal surface $S$ in $M$. $S$ is embedded and is a section of
the restriction of the bundle to the complement of a finite number
of points in $\Sigma$. 
\end{abstract}
\maketitle

\section{Introduction}

Let $M$ be a 3-manifold which is a circle bundle over a compact Riemann
surface $\Sigma$, with projection $\pi:M\to\Sigma$. Assume that
the metric on $M$ is compatible with the bundle projection, that
is, $\pi$ is a Riemannian submersion and the fibers are geodesics.
Assume that the Euler class $e(M)$ of the associated rank-2 vector
bundle $E$ over $\Sigma$ is even. The goal of this paper is to show
the existence of a smooth minimal surface in $M$. We will show the
existence of such a surface which is a section of the bundle except
over a finite set of points, and is topologically the Riemann surface
$\Sigma$ with a finite number of cross-caps. 

An example of such a minimal surface is described in \cite{BG}. Consider
$M=T_{1}(S^{2})$, the unit tangent bundle of the standard 2-sphere.
For any choice of a unit tangent vector $v$ at $p\in S^{2}$ , the
Pontryagin cycle $P$, the section defined by parallel translation
of $v$ along each longitude line from $p$, will be a smooth minimal
surface in $M$ which is a smooth section except over $-p$. $P$
is in this case a totally-geodesic $\R\Pa^{2}$ embedded in $M$. 

The authors thank Antonio Ros for suggesting this problem, and also
Olga Gil Medrano for many helpful suggestions. The second-named author
thanks the Universidad de Granada and the Universidad de Salamanca
for their support during his visits.

\section{Minimal graphs}

Let $X$ be a compact, $n$-dimensional manifold, and let $\pi:B\to X$
be a fiber bundle over $X$ with with compact fiber $F$ of dimension
$k$. Any such bundle admits a class of Riemannian metrics, due to
Sasaki \cite{Sasaki}, for which the projection $\pi$ is a Riemannian
submersion with totally-geodesic fibers isometric to $F$ under inclusion,
determined by a choice of connection on the associated principal bundle. 

\begin{defn}
A \emph{rectifiable section} $T$ in $B$ is a countably-rectifiable,
integer-multiplicity $n$-current in $B$ so that, 
\begin{enumerate}
\item $\left\langle \overrightarrow{T}(q),\mathbf{e}(q)\right\rangle \geq0$,
$\left\Vert T\right\Vert $-almost everywhere; where $\mathbf{e}(q)$
is the unique horizontal (orthogonal to the fibers) $n$-plane at
$q$ which maps onto $T_{*}(X,\pi(q))$ under $\pi_{*}$ (preserving
orientation), and $\overrightarrow{T}$ is the unit oriented $n$-vector
tangent plane of $T$ at $q$.
\item The image current $\pi_{\#}(T)$ is the fundamental class $1[X]$
as an $n$-dimensional current on $X$ with integer coefficients.
\item If $\partial X=\emptyset$, $\partial T\equiv0(mod\,2)$ as flat chains
modulo 2 (If $\partial X\neq\emptyset$, $\partial T$ must have support
contained in $\pi^{-1}(\partial X)=\partial B$).
\end{enumerate}
The space of all such \emph{rectifiable sections} of the bundle $B$
over $X$ will be denoted $\widetilde{\Gamma}(B)$.

\end{defn}
\begin{rem}
This definition differs slightly from that in \cite{JS2}, in that
here the currents are only required to be relative cycles mod-2. This
is necessary because the currents constructed will not be cycles as
integral chains. As rectifiable currents they are by definition oriented,
but currents corresponding to smooth submanifolds may be nonorientable
as manifolds, or, equivalently, may have interior boundaries as rectifiable
currents. Compact nonorientable manifolds without boundary (as manifolds)
are mod-2 cycles as flat chains modulo 2. 
\end{rem}
In \cite{JS2} it is shown that any homology class of rectifiable
sections has a minimal-mass representative, which is a continuous
section over an open, dense subset of $X$. As remarked in \cite[11.1]{Morgan}
or \cite[4.2.26]{GMT}, the extension from integer coefficients to
$\Z/2\Z$ coefficients, and considering the currents as flat chains
modulo 2 for the boundary condition will not alter the arguments of
\cite{JS2}. 

If $\sigma$ is a $C^{1}$ section, then the mass of the image (usually
called the \emph{volume} of the section in this case) is given by
\[
\mathcal{V}(\sigma):=\int_{X}\sqrt{1+\left\Vert \nabla\sigma\right\Vert ^{2}+\cdots+\left\Vert \nabla\sigma\wedge{\min\left\{ n,k\right\} \atop \cdots}\wedge\nabla\sigma\right\Vert ^{2}}dV_{X}.\]

\begin{thm}
\textbf{\emph{\label{thm:JS}\cite{JS2}}} Let $X$ be a compact manifold,
and let $B$ be a fiber bundle over $X$ with compact smooth fiber
$F$ and with an associated Sasaki metric. In any nonempty mod-2 homology
class of rectifiable sections $\sigma:X\to B$, there is a mass-minimizing,
rectifiable section which is continuous except over a set $S$ of
measure 0 in $X$.
\end{thm}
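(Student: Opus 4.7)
The plan is to follow the direct method of the calculus of variations, along the lines of \cite{JS2} and with the routine modification of replacing integer coefficients by $\Z/2\Z$ coefficients throughout. First I would select a mass-minimizing sequence $\{T_i\}\subset\widetilde{\Gamma}(B)$ within the prescribed mod-2 homology class, so that $M(T_i)\to m_0:=\inf\{M(T):T\in\widetilde{\Gamma}(B),\ [T]_2=[\sigma]_2\}$. The Riemannian-submersion hypothesis gives $M(T_i)\geq\mathrm{Vol}(X)$ for every $i$ (since $\pi$ is $1$-Lipschitz and $\pi_{\#}T_i=[X]$), while by hypothesis $\partial T_i\equiv 0\pmod 2$, so both masses and mod-2 boundary masses are uniformly bounded. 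Fleming's compactness theorem for flat chains modulo $2$ (\cite[4.2.17]{GMT}) then supplies a subsequential flat-norm limit $T$, and lower semicontinuity of mass yields $M(T)\leq m_0$.

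Next I would verify that $T\in\widetilde{\Gamma}(B)$. The pushforward condition $\pi_{\#}T=[X]$ and the boundary condition $\partial T\equiv 0\pmod 2$ both pass to flat-norm limits, since $\pi_{\#}$ and $\partial$ are continuous in that topology. The orientation condition $\langle\overrightarrow{T}(q),\mathbf{e}(q)\rangle\geq 0$ is the one demanding care: combined with $\pi_{\#}T=[X]$, it is equivalent to requiring that $\pi_{*}\overrightarrow{T}(q)$ be a positive multiple of the horizontal orientation wherever it is nonzero. If this inequality failed on a set of positive $\|T\|$-measure, one could reverse the orientation of $T$ on the offending rectifiable piece to produce a competitor in the same mod-2 class with strictly smaller mass (the volume integrand being strictly convex in the gradient), contradicting minimality. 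Hence $T$ is a mass-minimizing rectifiable section.

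The remaining step is regularity, which is the main obstacle. By interior regularity for mass-minimizing rectifiable currents modulo $2$ in the smooth ambient $B$ (Almgren--Allard $\varepsilon$-regularity, applied as in \cite{JS2}), there is a closed singular set $\Sigma_T\subset\mathrm{spt}(T)$ of $\mathcal{H}^n$-measure zero off of which $T$ is a $C^{1,\alpha}$-embedded submanifold. At any regular point $q$, the tangent plane $\overrightarrow{T}(q)$ satisfies $\langle\overrightarrow{T}(q),\mathbf{e}(q)\rangle\geq 0$, and the pushforward constraint $\pi_{\#}T=[X]$ forces $\pi_{*}\overrightarrow{T}(q)=T_{\pi(q)}X$ with the correct orientation, so $\overrightarrow{T}(q)=\mathbf{e}(q)$; thus $T$ is locally the graph of a $C^{1,\alpha}$ section near $q$, and the singular set $S:=\pi(\Sigma_T)\subset X$ has measure zero. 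The entire difficulty is concentrated in the $\varepsilon$-regularity step, where one must rule out vertical tangent cones by invoking the horizontality constraint; the rest is a cosmetic rewriting of \cite{JS2} with mod-$2$ coefficients in place of integers, as indicated by the preceding remark.
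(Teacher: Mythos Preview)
The paper does not supply its own proof of this theorem; it is quoted directly from \cite{JS2}, with the only argument offered being the remark just before the statement that passing from integer to $\Z/2\Z$ coefficients ``will not alter the arguments of \cite{JS2}'' by \cite[11.1]{Morgan} and \cite[4.2.26]{GMT}. So there is nothing in the paper to compare your sketch against beyond that one-line reduction.

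Your outline is a reasonable summary of the direct-method strategy underlying \cite{JS2}, and you correctly flag the $\varepsilon$-regularity step (ruling out vertical tangent behavior using the horizontality constraint) as where the real content lies. Two details in your sketch are off, however. First, your justification of the orientation condition for the limit is wrong: reversing the orientation of a rectifiable piece does not change its mass, so no contradiction with minimality follows, and ``strict convexity of the volume integrand in the gradient'' is not what is being minimized here. What one actually does is observe that flipping orientation on the offending piece yields a current in $\widetilde{\Gamma}(B)$ of the \emph{same} mass and the same mod-2 class, which already suffices for the existence claim; alternatively, condition (1) together with (2) is stable under flat limits. Second, your assertion that at a regular point $q$ one has $\overrightarrow{T}(q)=\mathbf{e}(q)$ is false: the tangent $n$-plane need only project isomorphically onto $T_{\pi(q)}X$, not be horizontal. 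What you need, and what is true, is that the tangent plane is transverse to the fiber at regular points, which is enough for $T$ to be a local $C^{1,\alpha}$ graph there. Neither slip derails the overall strategy, but both should be corrected if you intend this as more than a pointer to \cite{JS2}.
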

\begin{rem}
It should be noted that the original result shows the section to be
$C^{1}$ on an open dense set; continuity may hold on a slightly larger
set. Also, the theorem does not say there will not be other mass-minimizers
that may have worse regularity, only that there is one which is this
nicely behaved. Finally, Proposition (\ref{pro:nonempty}) below will
imply that there is such a nonempty homology class of rectifiable
sections in the cases we need.
\end{rem}
Now, let $M$ be a 3-manifold which is a circle bundle $\pi:M\to\Sigma$
over a compact Riemann surface $\Sigma$, with a Sasaki metric. The
Euler class $e(M)$ of $M$ is the Euler class of the associated orientable
rank-2 vector bundle $E\to\Sigma$. In order to show the existence
of the claimed minimal surface in $M$, we first have to show that
$\widetilde{\Gamma}(M)\neq\emptyset$ when $e(M)$ is even.

\begin{prop}
\label{pro:nonempty}If $M\to\Sigma$ is a circle bundle over a compact
Riemann surface, with even Euler class $e(M)$, then $\widetilde{\Gamma}(M)\neq\emptyset$.
\end{prop}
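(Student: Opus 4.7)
\emph{Strategy.} The plan is to construct an explicit element of $\widetilde{\Gamma}(M)$ as the graph of a smooth section defined over $\Sigma$ minus a single point, and to show that its boundary as a rectifiable current equals $e(M)$ times the removed fiber. This boundary vanishes modulo $2$ exactly when $e(M)$ is even, so the mod-2 condition in Definition 2.1(3) is what forces the hypothesis.

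\emph{Construction.} Fix a point $p\in\Sigma$. Since $\Sigma\setminus\{p\}$ deformation retracts onto a $1$-complex (a wedge of circles) and every oriented $S^{1}$-bundle over a $1$-complex is trivializable, $M|_{\Sigma\setminus\{p\}}$ is trivial and admits a smooth section $\sigma:\Sigma\setminus\{p\}\to M$. In a local trivialization $\pi^{-1}(D)\simeq D\times S^{1}$ over a disk $D$ around $p$, one may arrange that $\sigma|_{D\setminus\{0\}}$ has the model form $\sigma(z)=(z,e^{ie(M)\arg z})$. Let $T$ be the integer-multiplicity rectifiable $2$-current associated with the graph of $\sigma$, oriented by pulling back the orientation of $\Sigma$. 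Since $|\nabla\sigma|=O(1/|z|)$ near $p$, the mass integral $\int_{D}\sqrt{1+|\nabla\sigma|^{2}}\,dA$ is finite (controlled by $\int_{0}^{1}\sqrt{1+r^{-2}}\,r\,dr<\infty$), so $T$ has finite mass in $M$.

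\emph{Verification of Definition 2.1.} Conditions $(1)$ and $(2)$ are immediate. For $(1)$, on the full-measure set where $\sigma$ is smooth, $\langle\vec T,\mathbf{e}\rangle$ equals the (strictly positive) Jacobian of $\pi|_{\mathrm{graph}(\sigma)}$. For $(2)$, $\pi_{\#}T=\sigma_{\#}[\Sigma\setminus\{p\}]$ projects to $[\Sigma\setminus\{p\}]=[\Sigma]$ as a current since $\{p\}$ has measure zero. For $(3)$, $\mathrm{supp}(\partial T)\subset\pi^{-1}(p)$; for a $1$-form $\omega$ supported near $p$, Stokes applied to $\sigma(\Sigma\setminus D_{\varepsilon}(p))$ gives
\[
\partial T(\omega)=T(d\omega)=\lim_{\varepsilon\to 0}\int_{\sigma(\partial D_{\varepsilon}(p))}\omega.
\]
By the classical obstruction-theoretic description of the Euler class, the loop $\sigma(\partial D_{\varepsilon})$ wraps around the fiber $\pi^{-1}(p)\simeq S^{1}$ with net winding number $e(M)$. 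Hence $\partial T=e(M)\cdot[\pi^{-1}(p)]$ as an integral $1$-current, which is $0$ as a flat chain modulo $2$ precisely when $e(M)$ is even.

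\emph{Main obstacle.} The substantive step is the boundary identification $\partial T=e(M)\cdot[\pi^{-1}(p)]$. This is not deep — it is essentially the definition of $e(M)$ as the obstruction to extending $\sigma$ across $p$ — but it does require either a direct local computation in the trivialization above or a reference to the obstruction-theoretic description of the Euler class. Every other ingredient (the trivialization, finiteness of mass, projection and orientation conditions) is routine.
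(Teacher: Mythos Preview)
Your argument is correct. Both your proof and the paper's proceed by exhibiting an explicit section away from a finite set and checking that its boundary current vanishes modulo~$2$; the difference lies only in how the Euler number is distributed among the punctures.

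You concentrate the entire obstruction at a single point: trivialize over $\Sigma\setminus\{p\}$, take a section, and observe that $\partial T=e(M)\,[\pi^{-1}(p)]\equiv 0\pmod 2$. The paper instead chooses $k=|e(M)|/2$ points $p_1,\dots,p_k$ and builds a section whose local model at each $p_j$ is $z\mapsto z^{\pm2}$, so each boundary contribution is $\pm2\,[\pi^{-1}(p_j)]$, already zero mod~$2$ pointwise. Your route is shorter and makes the role of the evenness hypothesis completely transparent. The paper's construction buys something else: the index-$\pm2$ local model is precisely the one that later emerges as the \emph{only} possible singularity of a mass-minimizer (Corollary on indices), and its closure is already a smooth embedded cross-cap as in \cite{BG}. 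So the paper's comparison current lives in the same mod-$2$ homology class as the eventual minimizer and has the same qualitative singular structure, whereas your single index-$e(M)$ singularity would, for $|e(M)|>2$, have a closure that is not a manifold. For the bare statement $\widetilde{\Gamma}(M)\neq\emptyset$ this is irrelevant, and your argument suffices.
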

\begin{proof}
If $k:=|e(M)|/2$, choose $k$ points $\{p_{1},\dots,p_{k}\}\subset\Sigma$.
Essentially by the Poincar\'{e}-Bendixon theorem, there is a smooth
section of $E$ with zeros only at the points $p_{j}$, of index $\pm2$
at all points, where the sign is that of $e(M)$. Equivalently, given
$\epsilon>0$ sufficiently small, there is a smooth section $\tau$
of $\left.M\right|_{\Sigma\backslash\{B_{\epsilon}(p_{1}),\dots,B_{\epsilon}(p_{k})\}}$
with the following boundary conditions: for each $j$, $\left.M\right|_{\partial B_{\epsilon}(p_{j})}\cong S^{1}\times S^{1}$,
so the map $z\mapsto z^{n}$ defines a section on the boundary component
$\left.M\right|_{\partial B_{\epsilon}(p_{j})}$ with index $n=\pm2$
for each $j\leq k$. A smooth section $\tau$ exists with these boundary
conditions, where we choose the sign of $n$ to match the sign of
$e(M)$. This section can be constructed to extend, for any $0<\delta<\epsilon$,
to $\left.M\right|_{\Sigma\backslash\{B_{\delta}(p_{1}),\dots,B_{\delta}(p_{k})\}}$
with similar boundary conditions. The limit of these extensions, as
$\delta\to0$, has closure which is a mod-2 cycle in $M$, as in \cite{BG}.
This limit is a rectifiable section, so the space $\widetilde{\Gamma}(M)$
of rectifiable sections is nonempty. 
\end{proof}
Following \cite{JS2}, with the slight modification to the boundary
conditions, there is a mass-minimizing rectifiable section $\sigma$
in the mod-2 homology class of $\tau$ above, which is a continuous
section over an open dense subset. We show below that the exceptional
set is a finite collection of fibers over points $\{x_{1},\dots,x_{n}\}\subset\Sigma$,
and that the mod-2 cycle which is the closure of this section is a
smooth minimal surface in $M$. We emphasize that the exceptional
points of the minimizer need not be the points used in Proposition
(\ref{pro:nonempty}); in particular, the number $n$ of points may
be larger than $k$.

\section{Singularities}

Consider now a mass-minimizing rectifiable section $T$ of a circle
bundle $M$ over a compact Riemann surface $\Sigma$, with the metric
as described earlier. An \emph{exceptional point, }or a\emph{ singular
point} $x\in\Sigma$ is a point over which $T$ is not a continuous
section. Since $\pi(Supp(T))=\Sigma$, this implies that there are
two points, at least, in $\pi^{-1}(x)\cap Supp(T)$ for an exceptional
point $x$. Our first goal will be to show that the entire fiber is
enclosed in $T$ over any exceptional point. This step uses a basic
construction which will be needed elsewhere as well, a horizontal
sequence of stretches of the current.

\subsection{H-cones}

If $S\in\widetilde{\Gamma}(M)$ is a rectifiable section with finite
mass, and if $x_{0}\in\Sigma$ is an arbitrary point, then for sufficiently
small $r>0$, and for all $\lambda>1$, $S$ defines a rectifiable
section $S_{\lambda,R}$ in $B(0,R)\times S^{1}$ by $S_{\lambda,R}=\left[\left(\phi_{\lambda}\right)_{\#}\left(S\rest\pi^{-1}(B(0,r))\right)\right]\rest B(0,R)\times S^{1}$,
if $\lambda r\geq R$, where $x_{0}$ corresponds with the center
$0$ of the coordinate system, $\phi_{\lambda}(x,y)=(\lambda x,y)$,
and $\pi^{-1}(B(0,R))$ is identified with $B(0,R)\times S^{1}$,
having the Riemannian metric induced from $M$ and the dilation $\phi_{\lambda}$.
$B(0,\lambda r)\times S^{1}$ also has a specific Riemannian metric,
the metric from $M$ stretched horizontally by $\phi_{\lambda}$.
Clearly, for an arbitrary $R>0$, if $\lambda>1$ is sufficiently
large, $S_{\lambda,R}$ will be well-defined in $\widetilde{\Gamma}(B(0,R)\times S^{1})$.

An \emph{h-cone} $H$ of $S$ at $x_{0}\in\Sigma$, for a given sequence
$\lambda_{i}\to\infty$, is the limit, for each $R>0$, of the sequence
of restricted stretches $S_{\lambda_{i},R}$, if that limit exists
as a rectifiable section (thinking of $S_{\lambda,R}$ as rectifiable
sections of $B(0,R)\times S^{1}$ in order to define the limit), \emph{and}
if $H_{\lambda,R}=H\rest B(0,R)\times S^{1}$ for all $\lambda>1$.
The limit will be a rectifiable section of $B(0,R)\times S^{1}$ with
the flat Euclidean metric.

For a given point, current, and sequence of stretches, an h-cone may
or may not exist, just as tangent cones for rectifiable currents may
or may not exist at a given point. In addition, we make no claim for
uniqueness of such h-cones (the h-cone may depend upon the sequence
of stretches) even when they do exist. However, if $S$ is a mass-minimizing
rectifiable section, then an h-cone will exist over each base point.
Over a regular point, h-cones are simply horizontal planes, but over
singular points they reveal some of the singular structure. 

\begin{thm}
Let $S\in\widetilde{\Gamma}(M)$ be mass-minimizing and continuous
over an open dense subset, as in Theorem {[}\ref{thm:JS}]. At each
point $x_{0}\in\Sigma$, there is an h-cone.
\end{thm}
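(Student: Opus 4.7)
The plan is to apply Federer--Fleming compactness for rectifiable currents to the sequence of horizontally stretched sections $S_{\lambda_{i},R}$, extract a flat limit along a subsequence, and then verify the self-similarity that makes this limit an h-cone.

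First I establish a uniform mass bound: for each fixed $R>0$, $\mathbf{M}(S_{\lambda,R})$ is bounded independently of $\lambda$. Choose normal coordinates at $x_{0}$, so the base metric is $g_{ij}(x)=\delta_{ij}+O(|x|^{2})$. Since $S$ is mass-minimizing and $\pi_{\#}(S \rest \pi^{-1}(B(0,R/\lambda))) = [B(0,R/\lambda)]$, comparison with the explicit competitor obtained by joining a horizontal section over $B(0,R/\lambda)$ to a cylinder over its boundary circle gives $\mathbf{M}(S \rest \pi^{-1}(B(0,R/\lambda))) \leq \pi(R/\lambda)^{2} + 2\pi\ell\,R/\lambda + o(R/\lambda)$, where $\ell$ is the length of the fiber. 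Under $\phi_{\lambda}$, which scales horizontal $2$-dimensional area by $\lambda^{2}$ and base-curve length by $\lambda$, this translates into $\mathbf{M}(S_{\lambda,R}) \leq \pi R^{2} + 2\pi\ell R + o(1)$. A standard slicing argument provides analogous uniform control on $\mathbf{M}(\partial S_{\lambda,R})$ for almost every $R$.

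With these bounds, Federer--Fleming compactness (applied modulo $2$, as in the extension from integral to $\Z/2\Z$ coefficients already noted) yields a subsequence $\{\lambda_{i_{j}}\}$ along which $S_{\lambda_{i_{j}},R}$ converges in the flat topology to a rectifiable current $H_{R}$ in $B(0,R)\times S^{1}$, equipped with the flat product metric that is the $\lambda\to\infty$ limit of the stretched metrics. The defining properties of a rectifiable section --- the projection identity $\pi_{\#}T=[B(0,R)]$, the horizontal orientation $\langle \overrightarrow{T}, \mathbf{e}\rangle\geq 0$, and the mod-$2$ boundary condition --- are all closed under flat convergence, so $H_{R}$ is itself a rectifiable section. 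A diagonal argument across a sequence $R_{n}\to\infty$ produces a single subsequence along which all $S_{\lambda_{i_{j}},R_{n}}$ converge, yielding a globally defined rectifiable section $H$ of $\R^{2}\times S^{1}$.

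To obtain the cone property $H_{\mu,R} = H \rest B(0,R)\times S^{1}$ for every $\mu>1$, note that $\phi_{\mu}\circ\phi_{\lambda_{i_{j}}} = \phi_{\mu\lambda_{i_{j}}}$, so applying a further horizontal stretch to the chosen subsequence gives $(S_{\lambda_{i_{j}},R'})_{\mu,R} = S_{\mu\lambda_{i_{j}},R}$ whenever $R\leq\mu R'$. Since $\mu\lambda_{i_{j}}\to\infty$ as well, refining the diagonal extraction over a countable dense set of values of $\mu$ forces all such limits to exist; a horizontal monotonicity argument for the excess mass $\mathbf{M}(S \rest \pi^{-1}(B(x_{0},r))) - \pi r^{2}$ --- the analog for these stretches of the classical density monotonicity for mass-minimizing integral currents --- then forces the two limits to coincide, yielding $(\phi_{\mu})_{\#}H \rest B(0,R)\times S^{1} = H \rest B(0,R)\times S^{1}$ and hence the cone property. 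The main obstacle is this self-similarity step: the mass bound and compactness are routine once the stretched metric is interpreted correctly, but the classical tangent-cone monotonicity applies to uniform blow-ups, not to the purely horizontal stretches here, so a horizontal analog of monotonicity must be developed, or the subsequence argument must be refined carefully enough to guarantee that further stretches by any $\mu>1$ reproduce the same limit $H$.
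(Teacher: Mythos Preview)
Your overall strategy---uniform mass bound, compactness, then self-similarity---matches the paper's, but the execution differs in two places, and the second is where your acknowledged gap lies.

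For the mass bound, your explicit competitor (horizontal disk plus boundary cylinder) is a bit delicate: to be a valid competitor it must share $S$'s mod-2 boundary on $\partial B(0,R/\lambda)\times S^{1}$, and a horizontal disk plus the full torus does not obviously do this. The paper instead proves a clean horizontal monotonicity directly: with $f(t)=\mathcal{V}(S\rest\pi^{-1}(B(0,t)))$, comparison of $S$ against the \emph{horizontal cone} $C_{t}$ over the slice $S\rest\pi^{-1}(\partial B(0,t))$ (extend each boundary point inward along a horizontal ray to the central fiber) gives $tf'(t)\geq\mathcal{V}(C_{t})\geq f(t)$, so $f(t)/t$ is increasing. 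This yields the uniform bound $\mathcal{V}(S_{\lambda,R})\leq R\cdot f(R/\lambda)/(R/\lambda)\leq Rf(R)$ and is exactly the ``horizontal analog of monotonicity'' you say must be developed.

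For the h-cone property itself, however, the paper does \emph{not} proceed by showing the limit has constant density ratio and then arguing this forces conicality, as you propose. Instead it invokes the existence of ordinary (isotropic) oriented tangent cones to $S$ at each point $p\in\pi^{-1}(x_{0})$, established in \cite[Prop.~4.1]{JS2}. Any non-vertical ray in such a tangent cone, pushed forward by $\phi_{\lambda_{n}}$, converges to a horizontal ray through $p$; conversely every point of the limit $S_{0,R}$ lies on such a ray. Hence $S_{0,R}$ is a union of horizontal rays emanating from the central fiber and is automatically invariant under every $\phi_{\mu}$. This sidesteps entirely the question of whether further stretches of your chosen subsequence reproduce the same limit. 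So your gap is real, but the missing ingredient is not a refined diagonal extraction: it is either the horizontal-cone comparison above (which you could then try to feed into a density-constancy argument for the limit), or---more directly and as the paper does---the tangent-cone description of the limit current.
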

\begin{proof}
Certainly there is nothing to prove unless $x_{0}$ is a singular
point. In that case, the stretches satisfy, for $\lambda>1$, \begin{eqnarray*}
\mathcal{V}(S_{\lambda,R}) & = & \int_{B(0,R)}\sqrt{1+\frac{1}{\lambda^{2}}\left\Vert \left.\nabla u\right|_{x/\lambda}\right\Vert ^{2}}dA\\
 & \leq & \frac{1}{\lambda}\int_{B(0,R)}\sqrt{1+\left\Vert \left.\nabla u\right|_{x/\lambda}\right\Vert ^{2}}dA\\
 & \leq & \lambda\int_{B(0,R/\lambda)}\sqrt{1+\left\Vert \nabla u\right\Vert ^{2}}rdrd\theta\\
 & = & \lambda f(R/\lambda)\\
 & = & R\frac{f(R/\lambda)}{R/\lambda},\end{eqnarray*}
where $f(t)=\mathcal{V}(S\rest B(0,t)\times S^{1})$, and $S$ is
the graph of $u$ \emph{a.e.} Now, $f$ is increasing, thus is almost-everywhere
differentiable. We have that\begin{eqnarray*}
\frac{d}{dt}\left(\frac{f(t)}{t}\right) & = & \frac{tf'(t)-f(t)}{t^{2}}\\
 & \geq & 0.\end{eqnarray*}
To see this, let $C_{t}$ be the \emph{horizontal cone} over $S\rest\partial B(0,t)\times S^{1}$
defined by extending rays inward horizontally to the fiber over $0$,
that is, if $S=graph(u)$, then $C_{t}$ would be the graph of $v(x):=u(tx/\left\Vert x\right\Vert )$.
Then, since $S$ is volume-minimizing, and $f'(t)$ is the mass of
$S\rest\partial B(0,t)\times S^{1}$ \begin{eqnarray*}
tf'(t) & \geq & \mathcal{V}(C_{t})\\
 & \geq & \mathcal{V}(S\rest B(0,t)\times S^{1})\\
 & = & f(t).\end{eqnarray*}
Thus, $\frac{f(t)}{t}$ is increasing, and so, for $t<R$, $f(t)/t\leq A$,
where $A=f(R)$, or \[
R\left(\frac{f(R/\lambda)}{R/\lambda}\right)\leq RA,\]
and the mass of $S_{\lambda,R}$ is uniformly bounded (in $\lambda$)
for all $\lambda>1$ sufficiently large. Thus any sequence $\left\{ \lambda_{n}\right\} $
of stretches, as $\lambda\to\infty$, is uniformly bounded in mass
over a fixed $R$. In order to apply the compactness theorem, we need
to also show that the mod-2 boundaries $\partial_{2}S_{\lambda_{n}}=S_{\lambda_{n}}\rest\partial B(0,R)\times S^{1}$
have bounded mass. But, by slicing, for any $\lambda>0$, \[
\int_{s/2}^{s}\mathcal{V}\left(\partial\left(S_{\lambda}\rest B(0,r)\right)\right)dr\leq\mathcal{V}\left(S_{\lambda}\rest B(0,s)\right)\leq\lambda f(s/\lambda)\leq sA,\]
following \cite[Theorem 9.8]{Morgan} and \cite[5.4.3(6)]{GMT}, and
so for some $r$, $R/2<r<R$, $\mathcal{V}\left(\partial\left(S_{\lambda}\rest B(0,r)\right)\right)\leq\frac{RA}{R/2}=2A$.
Also, by slicing, almost-all such choices of $r$ have slices that
are rectifiable. Then, the further stretch $S_{(\lambda R/r),R}$
has rectifiable mod-2 boundary $\partial\left(S_{\lambda R/r}\rest B(0,R)\right)$,
with boundary volume $\mathcal{V}\left(\partial\left(S_{\lambda R/r}\rest B(0,R)\right)\right)\leq\left(\frac{R}{r}\right)2A\leq4A$,
So, any sequence $\lambda_{n}\to\infty$ can be modified to one with
a convergent subsequence. Set $S_{0,R}$ to be the limit of this subsequence,
$S_{0,R}:=\lim_{n}S_{\lambda_{n}}\rest B(0,R)\times S^{1}$. Taking
a further subsequence, since the boundaries $\partial_{2}S_{\lambda_{n},R}$
are rectifiable and have no boundary themselves, it can be assumed
that $\partial_{2}S_{\lambda_{n},R}$ also converges, to a rectifiable
section $B\in\widetilde{\Gamma}(S_{R}^{1}\times S^{1})$.

To see that $S_{0}$ is an h-cone, we use the fact that at each point
of $S$ there is an oriented tangent cone \cite[Prop. 4.1]{JS2} in
the usual sense. Any non-vertical ray in the tangent cone at a point
$p\in\pi^{-1}(x_{0})$, under the sequence of horizontal stretches
$\lambda_{n}$, will converge to a horizontal ray in $S_{0,R}$, and
any point of $S_{0,R}$ is on such a horizontal ray, so $S_{0,R}$
is an h-cone.
\end{proof}
Each element of a sequence $S_{\lambda_{i}}$ of horizontal stretches
of a mass-minimizing rectifiable section $S$ in turn minimizes a
modified functional, $\mathcal{V}_{\lambda_{i}}$ defined by\[
\mathcal{V}_{\lambda_{i}}(T):=\mathcal{V}\left(\left(\phi_{\frac{1}{\lambda_{i}}}\right)_{\#}(T)\right)\lambda_{i},\]
where $T\in\widetilde{\Gamma}(B(0,R)\times S^{1})$ with the metric
induced from $M$ by the stretch as before, and $\left(\phi_{\frac{1}{\lambda_{i}}}\right)_{\#}(T)\in\widetilde{\Gamma}(B(0,\frac{R}{\lambda_{i}})\times S^{1})$
has the original metric from $M$. $S_{\lambda_{i}}$ will minimize
$\mathcal{V}_{\lambda_{i}}$ among all rectifiable sections with the
same mod-2 boundary as $\partial_{2}S_{\lambda_{i}}=S_{\lambda_{i}}\rest\partial B(0,R)\times S^{1}$.

The functionals $\mathcal{V}_{\lambda_{i}}$ will converge to a limiting
functional $\mathcal{V}_{0}$. If $T$ is a graph of some smooth function
$u:B(0,R)\to S^{1}$, then $\mathcal{V}(u)=\int_{B(0,R)}\sqrt{1+\left\Vert \nabla u\right\Vert ^{2}}dA$,
and \begin{eqnarray*}
\mathcal{V}_{\lambda_{i}}(T) & := & \int_{B(0,R/\lambda_{i})}\sqrt{1+\lambda_{i}^{2}\left\Vert \left.\nabla u\right|_{\lambda_{i}x}\right\Vert ^{2}}dA\lambda_{i}\\
 & = & \int_{B(0,R)}\sqrt{1+\lambda_{i}^{2}\left\Vert \nabla u\right\Vert ^{2}}\frac{1}{\lambda_{i}^{2}}dA\lambda_{i}\\
 & = & \int_{B(0,R)}\sqrt{\frac{1}{\lambda_{i}^{2}}+\left\Vert \nabla u\right\Vert ^{2}}dA,\end{eqnarray*}
where the second line is just change of variables. On such a current,
clearly\[
\mathcal{V}_{0}(T)=\int_{B(0,R)}\left\Vert \nabla u\right\Vert dA.\]

This functional is called the {}``twisting'' of the current in \cite{BG},
at least in the case of the unit tangent bundle. The main property
of the limiting functional is that it will be minimized by the h-cone
of the volume-minimizer (the minimizer of the limit is the limit of
the minimizers), among all rectifiable sections in $B(0,R)\times S^{1}$
with the same boundary as the h-cone. This property is of course not
the general situation for arbitrary sequences of functionals, but
will hold in this case.

\begin{prop}
If, for some sequence $\lambda_{i}\to\infty$, the stretches $S_{\lambda_{i}}$
converge in $\widetilde{\Gamma}(B(0,R)\times S^{1})$ to $S_{0}$,
where $S$ minimizes $\mathcal{V}$ (and so $S_{\lambda_{i}}$ minimize
$\mathcal{V}_{\lambda_{i}}$) among all such currents with the same
boundary in $\partial B(0,R)\times S^{1}$, then $S_{0}$ minimizes
$\mathcal{V}_{0}$ among all elements of $\widetilde{\Gamma}(B(0,R)\times S^{1})$
with the same boundary as $S_{0}$.
\end{prop}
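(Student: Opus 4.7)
The assertion is a $\Gamma$-convergence-type stability claim: the minimality property must be transferred from $\mathcal{V}_{\lambda_i}$ to the limit functional $\mathcal{V}_0$. The plan is to establish the two standard ingredients---(a) a liminf inequality $\mathcal{V}_0(S_0)\leq\liminf_i\mathcal{V}_{\lambda_i}(S_{\lambda_i})$, and (b) a recovery sequence: for each competitor $T$ with $\partial_2 T=\partial_2 S_0$, rectifiable sections $T_i$ with $\partial_2 T_i=\partial_2 S_{\lambda_i}$ and $\limsup_i\mathcal{V}_{\lambda_i}(T_i)\leq\mathcal{V}_0(T)$---and then chain these through the minimality $\mathcal{V}_{\lambda_i}(S_{\lambda_i})\leq\mathcal{V}_{\lambda_i}(T_i)$ to conclude $\mathcal{V}_0(S_0)\leq\mathcal{V}_0(T)$.

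Ingredient (a) reduces to a pointwise bound plus lower semi-continuity of $\mathcal{V}_0$. Decompose the unit tangent bivector of a rectifiable section pointwise as $\xi=\xi^h+\xi^v$, with $\xi^h$ the horizontal $e_1\wedge e_2$-component and $\xi^v$ the mixed (one horizontal, one vertical factor) component. The pushforward calculation from the text extends to give
\[
\mathcal{V}_\lambda(T)=\int\sqrt{\lambda^{-2}|\xi^h|^2+|\xi^v|^2}\,d\|T\|,
\]
so pointwise $\mathcal{V}_\lambda(T)\geq\int|\xi^v|\,d\|T\|=\mathcal{V}_0(T)$. Since $\mathcal{V}_0$ is a mass induced by a continuous comass on $2$-forms, it is lower semi-continuous under rectifiable-section convergence, yielding $\mathcal{V}_0(S_0)\leq\liminf_i\mathcal{V}_0(S_{\lambda_i})\leq\liminf_i\mathcal{V}_{\lambda_i}(S_{\lambda_i})$.

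For ingredient (b), the elementary inequality $\sqrt{a^2/\lambda^2+b^2}-b\leq a/\lambda$ gives $0\leq\mathcal{V}_\lambda(T)-\mathcal{V}_0(T)\leq\mathrm{Mass}(T)/\lambda\to 0$, so $\mathcal{V}_{\lambda_i}(T)\to\mathcal{V}_0(T)$ for any fixed $T$ of finite mass. The real work is to make $T$ admissible against $S_{\lambda_i}$. Because $\partial_2 S_{\lambda_i}\to\partial_2 S_0=\partial_2 T$ in the flat topology on the $2$-torus $\partial B(0,R)\times S^1$, for $i$ large these two mod-$2$ $1$-cycles lie in the same mod-$2$ homology class of the torus; a standard filling argument then produces a $2$-chain $W_i$ on the torus with $\partial_2 W_i=\partial_2 S_{\lambda_i}-\partial_2 T$ (mod $2$) and $\mathrm{Mass}(W_i)\to 0$. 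Since $W_i$ is supported on $\partial B(0,R)\times S^1$ (where $\pi$ has rank one), $\pi_{\#}W_i=0$, so $T_i:=T+W_i$ remains a rectifiable section and has $\partial_2 T_i=\partial_2 S_{\lambda_i}$. The trivial bound $\mathcal{V}_\lambda\leq\mathrm{Mass}$ (valid for $\lambda\geq1$, from $\sqrt{\lambda^{-2}a^2+b^2}\leq\sqrt{a^2+b^2}$) gives $\mathcal{V}_{\lambda_i}(W_i)\leq\mathrm{Mass}(W_i)\to 0$, and subadditivity of $\mathcal{V}_{\lambda_i}$ yields $\limsup_i\mathcal{V}_{\lambda_i}(T_i)\leq\lim_i\mathcal{V}_{\lambda_i}(T)=\mathcal{V}_0(T)$, completing (b).

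The main obstacle is the bridge construction in (b): one must verify that the flat-norm difference of the boundaries can be realized as the boundary of a small-mass $2$-chain \emph{on the boundary torus itself}, so that $T+W_i$ pushes forward to $[B(0,R)]$ and is genuinely a rectifiable section in $\widetilde{\Gamma}(B(0,R)\times S^1)$. The key geometric fact underlying this is that flat convergence of mod-$2$ cycles forces the homology class to stabilize in the limit, guaranteeing the required filling for all large $i$; the remaining estimates---monotone convergence of $\mathcal{V}_{\lambda_i}(T)$, subadditivity, and the $\mathcal{V}_\lambda\leq\mathrm{Mass}$ inequality---are elementary.
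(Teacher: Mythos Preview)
Your argument is correct and shares with the paper the one genuinely nontrivial step: the small-mass filling $W_i$ on the boundary torus $\partial B(0,R)\times S^1$ with $\partial_2 W_i=\partial_2 S_{\lambda_i}-\partial_2 T$, used to turn a competitor for $S_0$ into a competitor for $S_{\lambda_i}$. The paper packages the rest as a direct contradiction via a chain of $\epsilon$-inequalities, passing through the intermediate quantity $\mathcal{V}_{\lambda_i}(S_0)$ and invoking $\mathcal{V}_{\lambda_i}(S_0)<\mathcal{V}_{\lambda_i}(S_{\lambda_i})+\epsilon$ without comment. You instead cast the proof in the standard $\Gamma$-convergence mold, obtaining the liminf inequality from lower semi-continuity of the comass functional $\mathcal{V}_0$ together with the pointwise bound $\mathcal{V}_0\leq\mathcal{V}_\lambda$, and the limsup (recovery) inequality from monotone convergence $\mathcal{V}_{\lambda}(T)\downarrow\mathcal{V}_0(T)$ plus the bridge. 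This is not a different idea so much as a cleaner organization: it isolates exactly which semi-continuity is being used and avoids the paper's slightly opaque step through $\mathcal{V}_{\lambda_i}(S_0)$. The one place to be careful, which you correctly flag, is that flat convergence of the boundary cycles on the torus only gives small flat norm, and one needs the isoperimetric inequality on the compact $2$-torus (plus eventual stability of the mod-$2$ homology class) to upgrade this to a small-\emph{mass} filling $W_i$; the paper asserts this filling without further comment as well.
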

\begin{proof}
Recall that, taking an appropriate subsequence, the sequence of mod-2
boundaries \[
\partial_{2}S_{\lambda_{i}}=S_{\lambda_{i}}\rest\partial(B(0,R)\times S^{1})\]
converge to $C:=\partial_{2}S_{0}=S_{0}\rest\partial(B(0,R)\times S^{1})$
in $\widetilde{\Gamma}(\partial B(0,R)\times S^{1})$. Then, for any
$\epsilon>0$, there is an $I$ sufficiently large so that, if $i>I$,
then there is a rectifiable current $T_{i}$ in $\partial B(0,R)\times S^{1}$
so that $\partial_{2}T_{i}=\left(\partial_{2}S_{\lambda_{i}}-\partial_{2}S_{0}\right)=\left(S_{\lambda_{i}}-S_{0}\right)\rest\partial(B(0,R)\times S^{1})$
with mass $\mathcal{M}(T_{i})<\epsilon$.

Assume that $S_{0}$ does not minimize $\mathcal{V}_{0}$ among all
rectifiable sections with the same boundary. Then, there is some $\epsilon>0$,
and a rectifiable section $T\in\widetilde{\Gamma}(B(0,R)\times S^{1})$
with $\partial_{2}T=\partial_{2}S_{0}$ so that $\mathcal{V}_{0}(T)<\mathcal{V}_{0}(S_{0})-4\epsilon$.
Choose $I$ above for this $\epsilon$. Since the functionals also
converge, choose $i>I$ sufficiently large so that $\mathcal{V}_{\lambda_{i}}(T)<\mathcal{V}_{0}(T)+\epsilon$,
$\mathcal{V}_{0}(S_{0})<\mathcal{V}_{\lambda_{i}}(S_{0})+\epsilon$,
and $\mathcal{V}_{\lambda_{i}}(S_{0})<\mathcal{V}_{\lambda_{i}}(S_{\lambda_{i}})+\epsilon$
Then, $\partial_{2}\left(T+T_{i}\right)=\partial_{2}S_{\lambda_{i}}$,
$T+T_{i}\in\widetilde{\Gamma}(B(0,R)\times S^{1})$, and\begin{eqnarray*}
\mathcal{V}_{\lambda_{i}}(T+T_{i}) & \leq & \mathcal{V}_{\lambda_{i}}(T)+\mathcal{V}_{\lambda_{i}}(T_{i})\\
 & < & \mathcal{V}_{0}(T)+\epsilon+\mathcal{M}(T_{i})\\
 & < & \mathcal{V}_{0}(T)+2\epsilon\\
 & < & \mathcal{V}_{0}(S_{0})-2\epsilon\\
 & < & \mathcal{V}_{\lambda_{i}}(S_{0})-\epsilon\\
 & < & \mathcal{V}_{\lambda_{i}}(S_{\lambda_{i}}),\end{eqnarray*}
which contradicts the fact that $S_{\lambda_{i}}$ minimizes $\mathcal{V}_{\lambda_{i}}$.
\end{proof}
With this result. we can identify the kind of singular behavior that
can occur.

\subsection{Index of singularities}

A singular point $x\in\Sigma$ of a mass-minimizing $S\in\widetilde{\Gamma}(M)$
as above determines an index, an integer $k_{x}$ generalizing the
index of vector fields.

Let $x\in\Sigma$, and let $S\in\widetilde{\Gamma}(M)$ be mass-minimizing.
For almost-all $\epsilon>0$ sufficiently small, the restriction $S_{\epsilon}:=S\rest\pi^{-1}(\partial B(x,\epsilon))$
determines a rectifiable section $S_{\epsilon}\in\widetilde{\Gamma}(S^{1}\times S^{1})$.
If $P:S^{1}\times S^{1}\to S^{1}$ is the projection onto the second
factor (the fiber), then $k_{\epsilon}$, defined by $P_{\#}(S_{\epsilon})=k_{\epsilon}\left[S^{1}\right]$,
is just the degree of the map. Choose a sequence $\lambda_{i}\to\infty$
so that, on $B(0,1)\times S^{1}$, $S_{\lambda_{i}}$ converges to
an h-cone $H$. Define the \emph{index} of $S$ at $x$, $k_{x}$,
to be \[
k_{x}:=\lim_{i\to\infty}k_{\lambda_{i}}=k_{H},\]
which can be viewed as either a limiting index, or, equivalently,
the index of the h-cone. Of course, the index is also defined for
non-minimizing sections, but does seem to require something like the
existence of an h-cone to guarantee existence and boundedness of the
limit.

\begin{prop}
\label{pro:index-0}If $x$ has index 0, then $x$ is a regular point.
\end{prop}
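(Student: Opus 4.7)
The plan is to show that at an index-$0$ point the h-cone must be a horizontal plane, and then to deduce regularity from this flatness.

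First I would note that, letting $H$ be an h-cone of $S$ at $x$, the preceding proposition says $H$ minimizes the limiting functional $\mathcal{V}_{0}(T) = \int_{B(0,R)}\|\nabla u\|\,dA$ among rectifiable sections of $B(0,R)\times S^{1}$ with the same mod-$2$ boundary as $H$. The defining invariance $H\rest B(0,R)\times S^{1} = (\phi_{\lambda})_{\#}H\rest B(0,R)\times S^{1}$ for all $\lambda>1$ then forces $H$, on the complement of its singular set, to be the graph of a function depending only on the polar angle: $u(r,\theta) = u(\theta)$. Using $k_{H} = k_{x} = 0$, I would lift the degree-$0$ boundary map $u:\partial B(0,R)\to S^{1}$ through the universal cover $\R\to S^{1}$ to a function $\bar u:S^{1}\to\R$, yielding
\[
\mathcal{V}_{0}(H) = R\int_{0}^{2\pi}|\bar u'(\theta)|\,d\theta = R\cdot\mathrm{TV}(\bar u).
\]

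The heart of the argument will be to show $\bar u$ must be constant, by producing a competitor $v\in\widetilde{\Gamma}(B(0,R)\times S^{1})$ with the same boundary but strictly smaller $\mathcal{V}_{0}$ whenever $\bar u$ is non-constant, contradicting minimality of $H$. Via the coarea formula applied to the cone, the level set $H^{-1}(t)$ is the union of rays from the origin to the points of $\bar u^{-1}(t)\subset\partial B(0,R)$, contributing $R\cdot\#\bar u^{-1}(t)$ to the coarea sum. A competitor can instead pair preimage points on the boundary and connect each pair by a chord inside $B(0,R)$; each chord has length at most $2R$, with equality only if its endpoints are antipodal. For continuous non-constant $\bar u$ of degree $0$ the preimages $\bar u^{-1}(t)$ coalesce as $t$ approaches $\max\bar u$ or $\min\bar u$, so for $t$ near these extremes two preimages lie arbitrarily close together and the chord is strictly shorter than the corresponding pair of rays. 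Integrating over $t$ then gives $\mathcal{V}_{0}(v) < \mathcal{V}_{0}(H)$, the desired contradiction. Jumps in $\bar u$ (the BV case) would be handled by approximation, since a jump in $\bar u$ produces a vertical ``wall'' piece in $H$ whose mass can be reduced by the same chord-type smoothing.

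This forces $\bar u$ constant, so $H$ is a horizontal-plane section of $B(0,R)\times S^{1}$. Once the h-cone at $x$ is flat, I would invoke standard regularity theory for mass-minimizing currents, in the form of an Allard-type regularity theorem adapted to the rectifiable-section setting, to conclude that $S$ is $C^{1,\alpha}$ on a neighborhood of $\pi^{-1}(x)$; in particular $S$ is continuous at $x$, so $x$ is a regular point. The main obstacle will be the competitor estimate above: the chord-based competitor must be constructed carefully enough that strict improvement over the cone persists despite the possible nonsmoothness of $\bar u$ and despite degenerate antipodal configurations of preimages, not merely in the smooth generic case.
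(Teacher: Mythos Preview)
Your overall strategy---show the h-cone is a single horizontal plane and deduce regularity---matches the paper, but both the central step and the final step differ substantially.

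For the central step, the paper does not build a competitor at all. It uses the first-variation condition for $\mathcal{V}_{0}$ directly: writing the h-cone as the graph of $e^{if(\theta)}$, stationarity gives
\[
0=\int\!\!\int_{B(0,R)}\frac{f_{\theta}\,h_{\theta}}{|f_{\theta}|}\,dr\,d\theta
\]
for all compactly supported $h$. Choosing the explicit test function $h(r,\theta)=\bigl(M-\epsilon(R-r)/R-f(\theta)\bigr)_{-}$ with $M=\sup f$ makes $h_{\theta}=-f_{\theta}$ on its support, so the integrand becomes $-|f_{\theta}|$ and the variation is strictly negative unless $f$ is a.e.\ constant. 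This is shorter and avoids the issue you flag yourself: your chord-based competitor requires that the chords (over all levels $t$ and all pairings) fit together as the level sets of an actual function on the disk, which is not automatic once $\bar u$ has more than two preimages per level or is merely BV.

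For the final step, invoking Allard-type regularity is misplaced. The h-cone arises from \emph{horizontal} stretches $\phi_{\lambda}(x,y)=(\lambda x,y)$, not from full dilations, so a flat h-cone does not directly give density $1$ for $S$ as a varifold, and Allard does not apply as stated. The paper's (implicit) argument is simpler and specific to this setting: if $x$ were singular there would be at least two distinct points of $\operatorname{Supp}(S)$ in the fiber $\pi^{-1}(x)$; since horizontal stretching fixes that fiber, both points survive to $\operatorname{Supp}(H)$, so $H$ cannot be a single horizontal plane. Thus ``$f$ constant'' already contradicts singularity, with no regularity theorem needed.
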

\begin{proof}
Assume that $x$ is a singular point with index 0. Then, the h-cone
$H$ of $S$ at $x$, in $\widetilde{\Gamma}(B(0,R)\times S^{1})$,
also has index 0, so that $H\rest\partial B(0,R)\times S^{1}$ is
a rectifiable section of degree 0, represented as a map $u:S^{1}\to S^{1}$
of degree 0, possibly with singularities or vertical portions. Thus
$u=e^{if}$ for some real-valued map $f$ (in general, rectifiable
section of the trivial line bundle), and $H$ is the graph of $u(r,\theta)=e^{if(\theta)}$.
The h-cone minimizes the functional \begin{eqnarray*}
\mathcal{V}_{0}(u) & = & \int\int_{B(0,R)}\left\Vert \nabla f\right\Vert rdrd\theta\\
 & = & \int\int_{B(0,R)}\left\Vert \frac{\partial f}{\partial\theta}\right\Vert drd\theta.\end{eqnarray*}
For any function $h(r,\theta)$ with support in the interior of $B(0,1)$,
\begin{eqnarray*}
0 & = & \left.\frac{\partial}{\partial t}\right|_{0}\int\int_{B(0,R)}\left\Vert \nabla(f+th)\right\Vert rdrd\theta\\
 & = & \int\int_{B(0,R)}\frac{\frac{1}{r}\frac{\partial f}{\partial\theta}\frac{\partial h}{\partial\theta}}{\left\Vert \frac{\partial f}{\partial\theta}\right\Vert }rdrd\theta\\
 & = & \int\int_{B(0,R)}\frac{\frac{\partial f}{\partial\theta}\frac{\partial h}{\partial\theta}}{\left\Vert \frac{\partial f}{\partial\theta}\right\Vert }drd\theta.\end{eqnarray*}
For some small $\epsilon>0$, take $h$ to be $h(r,\theta):=\left(M-\epsilon(R-r)/R-f(\theta)\right)_{-}$,
where by $()_{-}$ we mean the nonpositive part of the function, $\left(g\right)_{-}(x):=\inf\left\{ g(x),0\right\} $,
and $M=\sup\left\{ f(\theta)\right\} $. Since, in $Supp(h)$, which
has positive measure, $\partial h/\partial\theta=-\partial f/\partial\theta$,
$\frac{\frac{\partial f}{\partial\theta}\frac{\partial h}{\partial\theta}}{\left\Vert \frac{\partial f}{\partial\theta}\right\Vert }=-\left\Vert \frac{\partial f}{\partial\theta}\right\Vert $.
Unless $f$ is \emph{a.e.} constant, the integral will be negative,
which would contradict minimality of $S$. Thus $f$ must be constant,
and so the graph of $S$ is continuous at $x$.
\end{proof}
As a corollary, we can now show that at any singular point, the entire
fiber over the point is contained in the support of $S$.

\begin{cor}
If $x\in\Sigma$ is a singular point for a mass-minimizing $S\in\widetilde{\Gamma}(M)$,
then $\pi^{-1}(x)\subset Supp(S)$.
\end{cor}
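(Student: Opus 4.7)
The plan is to argue by contradiction through the index. If some $p \in \pi^{-1}(x)$ were missing from $Supp(S)$, then I would show that any h-cone $H$ of $S$ at $x$ avoids an entire arc of the central fiber, which makes its fiber-projection degree factor through a contractible set, forcing $k_H = 0$. By Proposition \ref{pro:index-0}, $x$ would then be regular, contradicting the hypothesis.

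First I would localize. Using local triviality of the circle bundle, fix a trivialization $\pi^{-1}(B(x,\epsilon)) \cong B(x,\epsilon) \times S^1$ in which $p$ corresponds to $(x,\theta_p)$. Since $Supp(S)$ is closed and $p \notin Supp(S)$, after shrinking $\epsilon$ and choosing a sufficiently small open arc $I \subset S^1$ about $\theta_p$, the cylinder $V := B(x,\epsilon) \times I$ is a neighborhood of $p$ disjoint from $Supp(S)$.

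Next I would transfer this gap into the h-cone. In this trivialization the horizontal dilation acts by $\phi_\lambda(y,\theta) = (\lambda y, \theta)$, so $\phi_\lambda\bigl(V \cap \pi^{-1}(B(0,r))\bigr) = B(0,\lambda r) \times I$. Taking $r = R/\lambda$ with $\lambda \geq R/\epsilon$, the stretched current $S_{\lambda,R}$ then has support disjoint from $B(0,R) \times I$. Choose $\lambda_i \to \infty$ so that $S_{\lambda_i,R}$ converges to an h-cone $H$, whose existence is guaranteed by the previous theorem. Any smooth test form with compact support in the open set $B(0,R) \times I$ pairs to zero with every $S_{\lambda_i,R}$ and hence with $H$, so $Supp(H) \subset B(0,R) \times (S^1 \setminus I)$.

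To finish, note that $S^1 \setminus I$ is a closed arc, hence contractible. For almost every $\rho \in (0,R)$ the slice $H \rest \pi^{-1}(\partial B(0,\rho))$ is a rectifiable section of $\partial B(0,\rho) \times S^1$ with image in $\partial B(0,\rho) \times (S^1 \setminus I)$; the fiber projection $P$ restricted to it factors through a contractible arc and so has degree $0$. Hence $k_x = k_H = 0$, and Proposition \ref{pro:index-0} forces $x$ to be regular, the desired contradiction. The main delicacy is the propagation of the support gap through the limit in the second step; this reduces to the standard fact that weak convergence of integer-multiplicity rectifiable currents preserves vanishing on all test forms compactly supported in a fixed open set, which is exactly what $Supp$ detects.
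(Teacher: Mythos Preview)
Your proof is correct and follows the same underlying idea as the paper: a singular point has nonzero index by Proposition~\ref{pro:index-0}, and nonzero index forces the full fiber into the support. The paper states this in one sentence, while you spell out the contrapositive in detail by propagating the ``missing arc'' through the horizontal stretches into the h-cone and reading off degree zero there.

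One remark: you could bypass the h-cone limit entirely. Once you have the tube $V = B(x,\epsilon)\times I$ disjoint from $Supp(S)$, every slice $S_\rho := S\rest\pi^{-1}(\partial B(x,\rho))$ for $\rho<\epsilon$ already has support in $\partial B(x,\rho)\times(S^{1}\setminus I)$, so $P_{\#}(S_\rho)$ is a closed $1$-current supported in a contractible arc and hence $k_\rho=0$ directly. Since $k_x$ is the limit of these integers, $k_x=0$ follows without invoking convergence of supports under flat limits. Your route through the h-cone is not wrong, but the extra limiting step (and the ``main delicacy'' you flag) is avoidable.
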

\begin{proof}
The singularity has to have nonzero index, which implies that the
entire fiber is in the support.
\end{proof}
\begin{cor}
The singular points of a mass-minimizing $S\in\widetilde{\Gamma}(M)$
are isolated. 
\end{cor}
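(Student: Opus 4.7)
The plan is to argue by contradiction in two main stages. Suppose some $x_0\in\Sigma$ is an accumulation point of singular points, so there is a sequence of distinct singular points $y_n\to x_0$ with $r_n:=|y_n-x_0|$ strictly decreasing to $0$. By Proposition~\ref{pro:index-0} every $k_{y_n}$ is a nonzero integer.

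First I would show that there exists $\epsilon_0>0$ such that all indices $k_{y_n}$ with $r_n<\epsilon_0$ share a common sign. The argument is a local surgery on the current $S$: if infinitely many singular points of opposite signs accumulate at $x_0$, then for arbitrarily small $\delta>0$ one can find a pair $y,y'$ with $k_y\cdot k_{y'}<0$ and $|y-y'|<\delta$. Enclose such a pair in a small disk $D\subset\Sigma$ of radius $R$ comparable to $|y-y'|$. The degree of $S\rest\pi^{-1}(\partial D)\to S^1$ is $k_y+k_{y'}$, which satisfies $|k_y+k_{y'}|<|k_y|+|k_{y'}|$. I build a competitor $\widetilde{S}$ that agrees with $S$ outside $\pi^{-1}(D)$ and replaces $S$ over $D$ by a rectifiable section having at most one singularity (of index $k_y+k_{y'}$, possibly zero) while matching the boundary trace of $S$ on $\pi^{-1}(\partial D)$. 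The asymptotic lower bound $\mathcal{V}(S\rest\pi^{-1}(B(p,r)))\geq \pi r^2+2\pi|k_p|r+o(r)$ at a singular point $p$ of index $k_p$, which follows from the h-cone mass formula and the monotonicity of $f(t)/t$ established in the h-cone existence theorem, produces a strict saving of order $2\pi(|k_y|+|k_{y'}|-|k_y+k_{y'}|)\cdot R$ on the singular contributions, while the interpolation cost in $D$ is only $O(R^2)$. For sufficiently small $R$ this gives $\mathcal{V}(\widetilde{S})<\mathcal{V}(S)$, contradicting minimality. Hence at most finitely many $y_n$ can have each sign, and after shrinking $\epsilon_0$ we may assume all $k_{y_n}$ with $r_n<\epsilon_0$ share a common sign.

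Next I would carry out a Poincar\'e--Hopf count. For each $\epsilon\in(0,\epsilon_0]$ with $\partial B(x_0,\epsilon)$ missing the singular set, let $k_\epsilon$ denote the degree of the restriction $S\rest\pi^{-1}(\partial B(x_0,\epsilon))$ onto the fiber. Over the annulus $B(x_0,\epsilon_0)\setminus\overline{B(x_0,\epsilon)}$ the section $S$ is continuous off the finitely many intervening $y_n$, so
\[
k_{\epsilon_0}-k_\epsilon \;=\; \sum_{\epsilon<r_n<\epsilon_0} k_{y_n}.
\]
As $\epsilon\to 0$, $k_\epsilon\to k_{x_0}$ by the h-cone definition of the index at a singular point, so combining with the common-sign conclusion of the first step,
\[
\#\{n:r_n<\epsilon_0\}\;\leq\;\sum_{n:\,r_n<\epsilon_0}|k_{y_n}|\;=\;|k_{\epsilon_0}-k_{x_0}|\;<\;\infty,
\]
contradicting $y_n\to x_0$.

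The main obstacle is the first step. The topological side of the surgery is routine, but quantifying the strict volume saving requires the h-cone lower bound on local mass near each singular point together with a matching upper bound (obtainable, for example, by harmonically extending the boundary trace over $D$) for the competitor $\widetilde{S}$. The mod-$2$ homology class is automatically preserved, since the surgery takes place inside a disk in $\Sigma$, which is contractible.
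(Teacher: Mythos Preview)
Your approach is quite different from the paper's, and there is a genuine gap in Step~2.

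For comparison, the paper's argument is short and does not use any surgery or index count. If $x_0$ has index $k$, twist $S$ by $v=e^{-ik\theta}$ to obtain a current $Sv$ of index $0$ at $x_0$; this $Sv$ minimizes a twisted volume $\mathcal{V}_v$, so its h-cone $Hv$ minimizes the corresponding limit functional $\mathcal{V}_{v,0}$. Running the variational computation of Proposition~\ref{pro:index-0} for this twisted functional forces $\frac{\partial f}{\partial\theta}+k\equiv 0$, i.e.\ the h-cone of $S$ at $x_0$ is exactly that of $e^{ik\theta}$, singular only at the origin. If singular points $x_i\to x_0$ existed, the full fibers $\pi^{-1}(x_i)\subset\mathrm{Supp}(S)$ would persist under the horizontal stretches and produce extra singular fibers in the h-cone, a contradiction.

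Your Step~2 is circular as written. The formula
\[
k_{\epsilon_0}-k_\epsilon=\sum_{\epsilon<r_n<\epsilon_0}k_{y_n}
\]
and the assertion that ``the section $S$ is continuous off the finitely many intervening $y_n$'' already presuppose that the singular set in the annulus $B(x_0,\epsilon_0)\setminus\overline{B(x_0,\epsilon)}$ consists of finitely many isolated points. But you only chose \emph{one} sequence $y_n\to x_0$; Theorem~\ref{thm:JS} allows the singular set to be any closed measure-zero subset of $\Sigma$, so there is no reason $x_0$ is the \emph{only} accumulation point in $B(x_0,\epsilon_0)$, nor that the $y_n$ exhaust the singular points there. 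Without that, the Poincar\'e--Hopf identity is unavailable and the passage to a finite sum is unjustified. Attempting to repair this by first picking $x_0$ to be an \emph{isolated} accumulation point of the singular set does not work either, since such a point need not exist.

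Your Step~1 is more plausible but also incomplete. The lower bound $f_p(r)\ge 2\pi|k_p|\,r$ near a singular point $p$ can indeed be extracted from the monotonicity of $f(t)/t$ together with the fact that any h-cone of index $k$ has $\mathcal{V}_0$-mass at least $2\pi|k|R$ (a degree-$k$ map $S^1\to S^1$ has total variation at least $2\pi|k|$). However, the upper bound for the competitor is not justified: you must match the actual boundary trace of $S$ on $\pi^{-1}(\partial D)$, about which you know only its degree and a slice-mass bound; a harmonic extension controls Dirichlet energy rather than the mass functional, and the claimed $O(R^2)$ interpolation cost does not follow without further work.
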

\begin{proof}
If a singular point $x_{0}$ is the limit of other singular points
and is of index $k$, consider the current defined in $B(0,R)\times S^{1}$
as $Sv$, where $v(r,\theta)=e^{-ik\theta}$, which has support \[
Supp(Sv)=\left\{ \left.(x,yv(x))\in B(0,R)\times S^{1}\right|(x,y)\in S\right\} \]
 and has the obvious tangent planes and multiplicities inherited from
$S$. Clearly $Sv$ has index 0 at $x_{0}$. However, since there
is a sequence of singular points of $S$ approaching $x_{0}$, and
for each such point $x_{i}$ the entire fiber is contained in the
support of $S$, this will also be true for $Sv$ since $v$ has only
$x_{0}$ as a singular point, so is regular at each $x_{i}$, and
thus $Sv$ is still singular at $x_{i}$ as claimed. Since $Supp(Sv)$
is closed, it must contain the entire fiber over $x_{0}$, so $x_{0}$
is a singular point of $Sv$, and is of index 0 at $x_{0}$. 

$Sv$ does not minimize the volume, but it does minimize a twisted
volume $\mathcal{V}_{v}$ defined for rectifiable sections of $B(0,R)\times S^{1}$
by $\mathcal{V}_{v}(T)=\mathcal{V}(Tv^{-1})$. Stretching as before,
the h-cone $Hv$ (where $H$ is the h-cone of $S$ for a specific
sequence of stretches) will minimize \[
\mathcal{V}_{v,0}(w)=\int\int_{B(0,R)\times S^{1}}\left\Vert \nabla(wv^{-1})\right\Vert dA.\]
Since $Hv$, the minimizer of this functional (with the boundary conditions
inherited from $S$), is an h-cone, we have the variational condition,
as before, if $w=e^{if}$, \begin{eqnarray*}
0 & = & \left.\frac{d}{dt}\right|_{0}\int_{0}^{R}\int_{0}^{2\pi}\left\Vert \nabla(f+th(r,\theta)+k\theta)\right\Vert rd\theta dr\\
 & = & \int_{0}^{R}\int_{0}^{2\pi}\frac{\left(\frac{\partial f}{\partial\theta}+k\right)\frac{\partial h}{\partial\theta}}{\left\Vert \frac{\partial f}{\partial\theta}+k\right\Vert }d\theta dr.\end{eqnarray*}
As with Proposition (\ref{pro:index-0}), taking $h$ to be $h(r,\theta):=\left(M-\epsilon(R-r)/R-f(\theta)-k\theta\right)_{-}$,
where $M$ is the maximum of $f$, would provide a contradiction unless
$\frac{\partial f}{\partial\theta}+k\equiv0$.

Thus, the h-cone of $S$ at $x_{0}$ is that of $v^{-1}$ itself,
which has only the singularity at 0. If $S$ had a sequence of singularities
approaching $x_{0}$, the h-cone would also. Thus the singularities
of $S$ are isolated.
\end{proof}
This result also shows:

\begin{cor}
Each singularity is of index $\pm2$.
\end{cor}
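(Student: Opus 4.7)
From the proof of the preceding corollary, at any singularity $x_{0}$ of index $k$, the h-cone of $S$ is (up to a constant fiber rotation) the graph $H$ of $u(r,\theta)=e^{ik\theta}$ on $B(0,R)\setminus\{0\}$, and by the proposition on convergence of minimizers this $H$ minimizes the limiting functional $\mathcal{V}_{0}$ in $\widetilde{\Gamma}(B(0,R)\times S^{1})$ among all sections with the same boundary on $\partial B(0,R)\times S^{1}$. My plan is to extract two constraints on $k$: a parity constraint forcing $k$ even, and an upper bound $|k|\leq 2$ coming from the minimality of $H$.

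For parity, I would observe that as $r\to 0$ the loop $\theta\mapsto(re^{i\theta},e^{ik\theta})$ sweeps the central fiber $|k|$ times, so the interior boundary of the graph current $H$ at the origin equals $k$ copies of $\pi^{-1}(0)$. Condition (3) of the definition of rectifiable section demands that the mod-2 boundary be supported on $\pi^{-1}(\partial B(0,R))$; hence this interior boundary must vanish mod 2, forcing $k$ to be even. With Proposition \ref{pro:index-0} ruling out $k=0$, we already have $|k|\in\{2,4,6,\ldots\}$.

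For the upper bound, I would assume $|k|\geq 4$ and build a competitor $H'\in\widetilde{\Gamma}(B(0,R)\times S^{1})$ with the same outer boundary as $H$ but strictly smaller $\mathcal{V}_{0}$, contradicting the minimality of $H$. Pick two points $p_{1},p_{2}\in B(0,d)$ symmetric about the origin, for small $d>0$, and start from $u'(z)=e^{i(k/2)(\arg(z-p_{1})+\arg(z-p_{2}))}$, which has index-$(k/2)$ singularities at each $p_{i}$ (both even, hence admissible by the parity check) and boundary data agreeing with $e^{ik\theta}$ up to error $O(kd^{2}/R^{2})$; then modify $u'$ in a thin annular collar of width $\delta$ near $\partial B(0,R)$ to match $e^{ik\theta}$ exactly, at $\mathcal{V}_{0}$-cost $O(\delta)$. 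The decisive estimate is that the inner $\mathcal{V}_{0}$ is strictly less than the h-cone value $2\pi|k|R$ by an amount of order $d$: this follows because $\mathcal{V}_{0}(u)=\int\|\nabla u\|\,dA$ for $S^{1}$-valued maps is dual, via Beckmann/optimal-transport, to $2\pi$ times the Wasserstein-1 cost of transporting the signed index measure to the boundary flux distribution, and splitting the single index-$k$ source at the origin into two index-$(k/2)$ sources at $\pm p$ strictly shortens the mean transport distance, since each source can route its mass to the nearer semicircle of $\partial B(0,R)$. Choosing $\delta\ll d$ then yields $\mathcal{V}_{0}(H')<\mathcal{V}_{0}(H)$, the desired contradiction; combined with the parity constraint and nonvanishing of the index, $|k|=2$.

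The hardest step is confirming that the Beckmann transport infimum is actually realized by a bona fide rectifiable section, not merely approximated: the optimal transport for $\mu_{+}=\sum d_{i}\delta_{p_{i}}$ corresponds to a $u'$ with $2\pi$-jumps along 1-rectifiable transport paths running from the $p_{i}$ to $\partial B(0,R)$, whose lifts to $B(0,R)\times S^{1}$ are 2-rectifiable vertical strips; these, together with the smooth graph of $u'$ off the paths, assemble into a valid rectifiable section. The $O(d)$ inner savings and the annular correction estimate are elementary once one expands $\arg(Re^{i\theta}\pm p)$ to first order in $d/R$.
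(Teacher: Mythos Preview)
The paper disposes of this corollary in a single sentence by citing \cite{BG}, so your self-contained argument necessarily takes a different route; the parity half (the h-cone graph of $e^{ik\theta}$ has interior integral boundary $k\,[\pi^{-1}(0)]$, which must vanish mod~$2$) is correct and is presumably part of what \cite{BG} supplies.

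The upper-bound half has a genuine gap. You split an index-$k$ vortex into two of index $k/2$ and assert these are ``both even, hence admissible,'' but $k$ even does not make $k/2$ even: for $|k|=6,10,\ldots$ your competitor carries odd-index singularities at $p_{1},p_{2}$ and by your own parity argument is not in $\widetilde{\Gamma}(B(0,R)\times S^{1})$, hence is not an admissible comparison current for $\mathcal{V}_{0}$. The obvious repair, splitting into $|k|/2$ vortices each of index $\pm 2$, forces you to redo the estimate for that configuration. There is also a conflation in the energy bound: the $O(d)$ Wasserstein saving you invoke comes from routing each split source to its nearer semicircle of $\partial B(0,R)$, whereas the explicit phase $u'(z)=e^{i(k/2)\arg((z-p_{1})(z-p_{2}))}$ has boundary normal flux that is uniform to first order in $d$ and so performs no such routing. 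The claimed $O(d)$ gain therefore lives only at the Beckmann infimum, and the whole argument hinges on the step you yourself flag as hardest---realizing that infimum by an actual element of $\widetilde{\Gamma}(B(0,R)\times S^{1})$ with boundary $e^{ik\theta}$---which is asserted but not carried out.
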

\begin{proof}
In \cite{BG}, it is shown that only an isolated singularity of index
$\pm2$ can be a rectifiable section.
\end{proof}
We finally are in a position to prove the main result, which is

\begin{thm}
Let $M$ be a circle bundle over a compact Riemann surface $\Sigma$,
with the Sasaki metric, so that the Euler number $e(M)$ of the circle
bundle is even. Then, there is a mass-minimizing rectifiable section
$S$ which is moreover a smooth, embedded minimal surface in $M$.
Topologically, $S$ is $\Sigma$ with a finite number of cross-caps.
\end{thm}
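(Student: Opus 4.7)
The plan is to combine the preceding propositions into a three-stage argument: construction of a candidate minimizer, interior regularity away from the singular fibers, and local analysis at each singular fiber together with identification of the topology.

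First, I would apply Proposition \ref{pro:nonempty} to obtain a nonempty mod-2 homology class of rectifiable sections, then invoke Theorem \ref{thm:JS} to extract a mass-minimizing representative $S\in\widetilde{\Gamma}(M)$ that is continuous on an open dense subset. By the corollaries of the preceding subsection, the exceptional set is a finite collection of points $\{x_{1},\dots,x_{n}\}\subset\Sigma$, at each of which $S$ has index $\pm2$ and the entire fiber lies in $\mathrm{Supp}(S)$. On $\Sigma\setminus\{x_{1},\dots,x_{n}\}$, $S$ is locally the graph of a continuous map $u$ to $S^{1}$, and in local bundle coordinates the volume functional $\mathcal{V}$ coincides with the area of such graphs. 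Hence $u$ is a continuous mass-minimizer for the graph area functional, and by standard interior elliptic regularity---De Giorgi--Nash to obtain H\"older gradients, then Schauder estimates for the resulting quasilinear minimal graph equation---$u$ is $C^{\infty}$. Thus $S$ is a smooth, embedded minimal section on the complement of the $n$ singular fibers.

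The main work is to extend smoothness across each singular fiber. At $x_{j}$ of index $n_{j}=\pm2$, twist by $v(r,\theta)=e^{-in_{j}\theta}$ as in the previous corollary; the twisted section $Sv$ has index $0$ at $x_{j}$, and the Euler--Lagrange computation performed there forces its h-cone to be constant. It follows that the h-cone of $S$ itself is the explicit index-$\pm2$ model: the graph of $e^{in_{j}\theta}$ together with the full fiber over the origin. This is precisely the local cross-cap realized by the Pontryagin cycle in the introductory example, smoothly embedded as a totally-geodesic $\R\Pa^{2}$-type piece of the flat model $B(0,R)\times S^{1}$. Combining uniqueness of the h-cone at this specific index with the monotonicity $\tfrac{d}{dt}(f(t)/t)\ge 0$ established in Section 3.1, the blow-downs converge in $C^{0,\alpha}$ and then $C^{1,\alpha}$ to the cross-cap; passing to branched coordinates in which the doubled wrap around the fiber becomes a single-valued graph over a disk, the minimal surface equation bootstraps to $C^{\infty}$ regularity. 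Embeddedness is automatic since $S$ is single-valued off the singular fibers and meets each singular fiber along the ``diameter'' of the local cross-cap rather than as two crossing sheets.

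The topological description then follows by cutting and pasting: $S$ restricted to $\Sigma\setminus\bigcup_{j}B_{\epsilon}(x_{j})$ is diffeomorphic to $\Sigma$ with $n$ open disks removed, and each boundary circle is capped off by the M\"obius band produced by the singular-fiber analysis, yielding $\Sigma$ with $n$ cross-caps. I expect the principal obstacle to be the last regularity step, namely upgrading the cross-cap h-cone to a genuinely smooth, embedded minimal surface across a singular fiber; this is the branched-section analogue of classical regularity for area-minimizing graphs and depends essentially on uniqueness of the h-cone at index $\pm2$ together with the branched-coordinate bootstrap outlined above.
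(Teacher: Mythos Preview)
Your proposal is correct and follows essentially the same three-stage architecture as the paper: existence via Theorem~\ref{thm:JS}, interior regularity on the complement of the (finitely many, index-$\pm 2$) singular fibers by standard minimal-graph theory, and regularity across each singular fiber via the explicit $e^{\pm 2i\theta}$ h-cone model, with the cross-cap topology read off from \cite{BG}. The only difference is one of emphasis: where you spell out an Allard-type route (uniqueness of the h-cone, $C^{1,\alpha}$ blow-down convergence, branched-coordinate bootstrap), the paper simply observes that the asymptotic model forces the current to be $C^{1}$ near each singular fiber and then invokes codimension-one regularity for $C^{1}$ surfaces with vanishing weak mean curvature.
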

\begin{proof}
For a volume-minimizing section $S\in\widetilde{\Gamma}(M)$ as shown
to exist by Theorem {[}\ref{thm:JS}], we have shown that there are
a finite number of singular points $x\in\Sigma$, over each of which
$S$ contains the entire fiber, and the index is $\pm2$.  On the
complement of those singular fibers, $S$ is a continuous graph and
is a minimal surface, so it is smooth (since it is codimension 1 in
a 3-manifold.). 

In an $\epsilon$-neighborhood of the singular fibers, the graph is
asymptotically that of $u=e^{\pm2i\theta}$, and so the current is
$C^{1}$ at these points. Since it is of class $C^{1}$ and minimal
(weak mean curvature vanishing), it is a smooth minimal surface. That
the structure of the surface in a neighborhood of a singularity is
a cross-cap can be found in \cite{BG}. The topological statement
then follows.
\end{proof}
\begin{rem}
While it seems clear that a volume-minimizing rectifiable section
should have a minimal number of singular points, since singularities
add to volume \cite{BCJ}, we do not make that claim. If, however,
the Euler class of the bundle is 0, there will be a smooth minimizer
which is a global section, since in that case we can work with the
class of currents which are limits of smooth sections. All singularities
would then be of index 0, and so could not exist.
\end{rem}

\begin{rem}
A similar statement should hold for bundles with odd Euler class,
except that the minimizer will not be a section, but will generically
be the double of a section with a finite number of singular fibers.
The Hopf fibration $S^{3}\to S^{2}$ provides such an example, with
the equatorial $S^{2}$ in $S^{3}$ being the double-section, meeting
each fiber at two points except for one fiber contained in the surface.
\end{rem}


\begin{thebibliography}{1}
\bibitem{BG}V. Borrelli and O. Gil Medrano, \emph{Area-minimizing
vector fields on round 2-spheres, }to appear.

\bibitem{BCJ} F. G. B. Brito, P. M. Chac\'{o}n, and D. L. Johnson,\emph{
Unit fields on punctured spheres}, Bulletin de la Société Mathématique
de France,\textbf{ 136 }(1) (2008), 147-157.

\bibitem{GMT}H. Federer, \textit{Geometric Measure Theory,} Springer-Verlag
1969. 

\bibitem{JS2} D. L. Johnson and P. Smith,\emph{ Partial regularity
of mass-minimizing rectifiable sections}, Annals of Global Analysis
and Geometry \textbf{30} (2006), 239-287. 

\bibitem{Morgan}F. Morgan, \emph{Geometric Measure Theory, A Beginner's
Guide}, Academic Press, fourth edition, 2008.

\bibitem{Sasaki}T. Sasaki, \textit{On the differential geometry of
tangent bundles of Riemannian manifolds}, T\^ ohoku Math. J. \textbf{10}
(1958), 338--354.
\end{thebibliography}
\end{document}